\DeclareMathOperator{\ad}{ad}
\DeclareMathOperator{\Ad}{Ad}
\DeclareMathOperator{\trace}{trace}
\DeclareMathOperator{\Ric}{Ric}
\renewenvironment{proof}[1][Proof]{\textbf{#1.} }
{\ \rule{0.5em}{0.5em}}
\newtheorem{theorem}{Theorem}
\newtheorem{prop}{Proposition}
\newtheorem{lemma}{Lemma}
\newtheorem{corollary}{Corollary}
\theoremstyle{definition}
\newtheorem{definition}{Definition}
\newtheorem{remark}{Remark}
\begin{document}

\title
[Geodesic orbit manifolds \dots] {Geodesic orbit manifolds and
Killing fields\\ of constant length}

\author{Yu.G.~Nikonorov}

\address{Nikonorov\ Yuri\u\i\  Gennadyevich\newline
South Mathematical Institute of  VSC RAS, \newline
Vladikavkaz,
Russia}
\email{nikonorov2006@mail.ru}

\begin{abstract}
The goal of this paper is to clarify connections between Killing
fields of constant length on a Rimannian geodesic orbit manifold
$(M,g)$ and the structure of its full isometry group. The Lie
algebra of the full isometry group of  $(M,g)$  is identified with
the Lie algebra of Killing fields $\mathfrak{g}$ on $(M,g)$. We
prove the following result: If $\mathfrak{a}$ is an abelian ideal
of $\mathfrak{g}$, then every Killing field $X\in \mathfrak{a}$
has constant length. On the ground of this assertion we give a new proof
of one result of C.~Gordon: Every Riemannian geodesic orbit
manifold of nonpositive Ricci curvature is a symmetric space.

\vspace{2mm} \noindent 2000 Mathematical Subject Classification:
53C20, 53C25, 53C35.

\vspace{2mm} \noindent Key words and phrases: Killing fields of
constant length, homogeneous Riemannian manifolds, geodesic orbit
spaces, symmetric spaces, Ricci curvature.
\end{abstract}

\maketitle

\section{Introduction, notation and useful facts}

All manifolds in this paper are supposed to be connected.
At first, we recall and discuss important definitions.

\begin{definition}\label{GOman}
A Riemannian manifold $(M,g)$ is called a  manifold with
homogeneous geodesics or geodesic orbit manifold (shortly,  {\bf GO-manifold}) if any
geodesic $\gamma $ of $M$ is an orbit of  1-parameter subgroup of
the full isometry group of $(M,g)$.
\end{definition}

\begin{definition}\label{GOsp}
A Riemannian manifold $(M=G/H,g)$, where $H$ is a compact subgroup
of a Lie group $G$ and $g$ is a $G$-invariant Riemannian metric,
is called a space with homogeneous geodesics or geodesic orbit space
(shortly,  {\bf
GO-space}) if any geodesic $\gamma $ of $M$ is an orbit of
1-parameter subgroup of the group $G$.
\end{definition}

This terminology was introduced in
\cite{KV} by O.~Kowalski and L.~Vanhecke, who initiated a systematic study of such spaces.
In the same paper, O.~Kowalski and L.~Vanhecke classified all GO-spaces
of dimension $\leq 6$.
Many  interesting results  about  GO-manifolds
and its subclasses one can find  in \cite{AA, AN, AV, BerNik,
BerNik3, BerNikClif, DuKoNi,Ta, Tam, W1}, and in the references
therein.
In \cite{Gor96}, C.~Gordon obtained some
structure results on GO-spaces, in particular, the following one: Every
Riemannian GO-manifold of nonpositive Ricci curvature is a
symmetric space.

\medskip
The goal of this paper is to clarify connections between Killing
fields of constant length on a Rimannian GO-manifold $(M,g)$ and
the structure of its full isometry group. The Lie algebra of the
full (connected) isometry group of  $(M,g)$  is identified
naturally with the Lie algebra of Killing fields $\mathfrak{g}$ on
$(M,g)$. We prove the following result: If $\mathfrak{a}$ is an
abelian ideal of $\mathfrak{g}$, then every Killing field $X\in
\mathfrak{a}$ has constant length (Theorem \ref{ideal}). On the ground
of this theorem we give a new proof of the above mentioned result
of C. Gordon on GO-manifold with nonpositive Ricci curvature
(Theorem \ref{gordonnonpos}).

\medskip

Let $(M, g)$ be a GO-manifolds and $G$ is its connected full
isometry group. Obviously, $(M, g)$ is homogeneous and $M=G/H$,
where $H$ is the isotropy subgroup at a point $o\in M$. Since $H$
is compact, there is an $\Ad(H)$-invariant decomposition

\begin{equation}\label{reductivedecomposition}
\mathfrak{g}=\mathfrak{h}\oplus \mathfrak{m},
\end{equation}
where $\mathfrak{g}={\rm Lie }(G)$ and $\mathfrak{h}={\rm Lie}
(H)$. The Riemannian metric $g$ is $G$-invariant and is determined
by an $\Ad(H)$-invariant Euclidean metric $g = (\cdot,\cdot)$ on
the space $\mathfrak{m}$ which is identified with the tangent
space $T_oM$ at the initial point $o = eH$.

In what follows we identify elements of $\mathfrak{g}$ with
corresponded Killing vector fields on $(M,g)$.

Now, we recall some well known formulas for a homogeneous
Riemannian manifold $(M,g=(\cdot, \cdot))$ \cite{Bes}. Let us
choose some $g$-orthonormal basis $(X_i)$ in $\mathfrak{m}$.
Consider also a vector $Z\in \mathfrak{m}$ defined by the
condition $(Z,X)=\trace (\ad_X)$ for every $X\in \mathfrak{m}$.
Therefore, $Z=0$ iff the Lie algebra $\mathfrak{g}$ (and the Lie
group $G$) is unimodular. The following formulae (that is more
simple for the unimodular case) is useful for the Ricci curvature
calculations:
$$
\Ric (X,X)=-\frac{1}{2}B_{\mathfrak{g}}(X,X)
-\frac{1}{2}\sum\limits_i |[X,X_i]_{\mathfrak{m}}|^2+
$$
\begin{equation}\label{ricc}
+\frac{1}{4}\sum\limits_{i,j}([X_i,X_j]_{\mathfrak{m}},X)^2-
([Z,X]_{\mathfrak{m}},X),
\end{equation}
where $B_{\mathfrak{g}}$ is the Killing form of the Lie algebra $\mathfrak{g}$,
$X\in \mathfrak{m}$, and
$V_{\mathfrak{m}}$ means the
$\mathfrak{m}$-part of a vector $V \in \mathfrak{g}$.

\begin{lemma}[\cite{KV}]\label{GO-criterion}
A homogeneous Riemannian manifold   $(M=G/H,g)$ with the reductive
decomposition  (\ref{reductivedecomposition}) is GO-space if and
only if  for any $X \in \mathfrak{m}$ there is $H_X \in
\mathfrak{h}$ such that

$([H_X +X,Y]_{\mathfrak{m}},X) =0$ for all $Y\in \mathfrak{m}$.
\end{lemma}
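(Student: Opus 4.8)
The plan is to isolate the pointwise ``geodesic lemma'' behind the statement and then obtain the criterion by a homogeneity argument. First I would reduce everything to orbits through the base point $o=eH$: since $G$ is transitive, any geodesic of $(M,g)$ can be moved by an element of $G$ to one passing through $o$; and a curve $t\mapsto \exp(tW)\cdot o$ with $W\in\mathfrak g$ automatically has constant speed --- its velocity at $\exp(tW)\cdot o$ is the value there of the Killing field identified with $W$, which is invariant under the flow $\exp(tW)$ --- and initial velocity $W_{\mathfrak m}\in T_oM\cong\mathfrak m$. As geodesics through $o$ are determined by their velocity at $o$, the manifold $(M,g)$ is a GO-space if and only if for every $X\in\mathfrak m$ there is $W\in\mathfrak g$ with $W_{\mathfrak m}=X$ such that $t\mapsto\exp(tW)\cdot o$ is a geodesic. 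Writing $W=H_X+X$ with $H_X\in\mathfrak h$, this reduces the statement to: the orbit $t\mapsto\exp(t(H_X+X))\cdot o$ is a geodesic if and only if $([H_X+X,Y]_{\mathfrak m},X)=0$ for all $Y\in\mathfrak m$.

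To prove this equivalence I would use that $\gamma(t)=\exp(tW)\cdot o$ is the integral curve through $o$ of the Killing field identified with $W$, so $\nabla_{\dot\gamma}\dot\gamma=\nabla_W W$ along $\gamma$, together with the identity $(\nabla_W W,Y)=-\tfrac12\,Y(|W|^2)$ valid for every vector field $Y$, which follows at once from the Killing equation $(\nabla_Y W,Z)+(\nabla_Z W,Y)=0$. Hence $\gamma$ is a geodesic if and only if the smooth function $p\mapsto|W_p|^2$ has vanishing differential along $\gamma$; and since $\exp(tW)$ is an isometry sending $o$ to $\gamma(t)$ and preserving $|W|^2$, it is enough to check that $|W|^2$ is critical at $o$. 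Using the $\Ad$-equivariance of the fundamental vector fields one gets $|W_{\exp(sU)\cdot o}|^2=\bigl|(\Ad(\exp(-sU))W)_{\mathfrak m}\bigr|^2$ for $U\in\mathfrak m$, and differentiating at $s=0$ yields $\tfrac{d}{ds}\big|_{s=0}|W_{\exp(sU)\cdot o}|^2=-2\,([U,W]_{\mathfrak m},W_{\mathfrak m})=-2\,([U,H_X+X]_{\mathfrak m},X)$. As $U$ runs over $\mathfrak m$ the vectors $U_{\mathfrak m}$ exhaust $T_oM$, so criticality of $|W|^2$ at $o$ is exactly the condition $([H_X+X,Y]_{\mathfrak m},X)=0$ for all $Y\in\mathfrak m$, which proves the equivalence.

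It then remains to reassemble the criterion. For the ``if'' direction, given an arbitrary geodesic $\sigma$ I would move it by $\phi\in G$ so that $\phi\circ\sigma$ passes through $o$ with some velocity $X\in\mathfrak m$ there, invoke the hypothesis to get $H_X$, and use the geodesic lemma together with uniqueness of geodesics to identify $\phi\circ\sigma$ with $t\mapsto\exp(t(H_X+X))\cdot o$; then $\sigma$ is the orbit of $\phi^{-1}o$ under the one-parameter subgroup $\phi^{-1}\exp(t(H_X+X))\phi$, so $(M,g)$ is a GO-space. The ``only if'' direction is the same reasoning read backwards. I expect the only real difficulty to lie in the bookkeeping inside the geodesic lemma: the isotropy term $H_X$ vanishes at $o$ but its covariant derivative there does not, and one must stay consistent about sign conventions both in the identification $\mathfrak g\to\{\text{Killing fields}\}$ and in the $\Ad$-equivariance of the fundamental vector fields; the geometric input --- that an integral curve of a Killing field is a geodesic exactly where the length of the field is critical --- is elementary, and all the care is in the reductive-decomposition computation.
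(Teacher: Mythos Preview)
The paper does not give its own proof of this lemma; it is quoted from \cite{KV} and used as a black box. So there is nothing to compare your argument against here.

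That said, your proposal is sound and in fact matches the spirit of the tools the paper does develop independently: Lemma~\ref{geodkill} (an integral curve of a Killing field is a geodesic iff the base point is critical for the squared length) and Lemma~\ref{newcrit} (the criticality rephrased as $g_x([Y,X],X)=0$ for all $Y$) are exactly the geometric input you invoke. Your computation of $\tfrac{d}{ds}\big|_{s=0}|W_{\exp(sU)\cdot o}|^2$ via $\Ad$-equivariance is correct, and the assembly of the two implications from the pointwise ``geodesic lemma'' is the standard route taken in \cite{KV}. The caveats you flag --- sign conventions in the identification of $\mathfrak g$ with Killing fields, and the fact that $H_X$ vanishes at $o$ but contributes through its covariant derivative --- are the right ones, and none of them obstructs the argument.

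One cosmetic point: when you write ``as $U$ runs over $\mathfrak m$ the vectors $U_{\mathfrak m}$ exhaust $T_oM$'', you presumably mean that the initial velocities of the curves $s\mapsto\exp(sU)\cdot o$ fill $T_oM$; since $U\in\mathfrak m$ already, writing $U_{\mathfrak m}$ is redundant. This is harmless.
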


This lemma  shows that  the property to be GO-space  depends only
on the reductive decomposition (\ref{reductivedecomposition})  and
the Euclidean metric $g$ on $\mathfrak{m}$. In other words, if $(M
= G/H, g)$ is a GO-space, then any locally isomorphic homogeneous
Riemannian manifold $(M'= G'/H', g')$ is a GO-space.  Also  a
direct product of Riemannian manifolds is a manifold with
homogeneous geodesics if and only if each factor is a manifold
with homogeneous geodesics.

For any subspace $\mathfrak{l} \subset \mathfrak{g}$ and any $U\in
\mathfrak{g}$ we use the symbol $\ad_U^{\mathfrak{l}}$ for a
restriction of the operator $\ad_U$ to $\mathfrak{l}$, i.e.
$\ad_U^{\mathfrak{l}}: {\mathfrak{l}} \rightarrow {\mathfrak{l}}$,
$\ad_U^{\mathfrak{l}}(X)=[U,X]_{\mathfrak{l}}$.

\begin{lemma}\label{simple}
Suppose that $(M=G/H,g)$ is a GO-space. Let $\mathfrak{m}_1$ and $\mathfrak{m}_2$
be $\Ad(H)$-invariant subspaces of $\mathfrak{m}$
such that $(\mathfrak{m}_1,\mathfrak{m}_2)=0$ and $\mathfrak{m}=\mathfrak{m}_1 \oplus\mathfrak{m}_2$.
Then for any $U\in
\mathfrak{m}_1$ the operator  $\ad_U^{\mathfrak{m}_2}$ is
skew-symmetric. If, in addition, $[\mathfrak{h},\mathfrak{m}_1]=0$, then the operator $\ad_U^{\mathfrak{m}}$ is
skew-symmetric.
\end{lemma}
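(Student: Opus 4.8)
The plan is to deduce both assertions from the GO-criterion (Lemma~\ref{GO-criterion}), using only the elementary fact that $\Ad(H)$-invariance of a subspace forces it to be $\ad(\mathfrak{h})$-invariant, so that $[\mathfrak{h},\mathfrak{m}_i]\subseteq\mathfrak{m}_i$. Since skew-symmetry of a linear operator $A$ on a Euclidean space is equivalent to $(AX,X)=0$ for all $X$, in each case I will produce an identity of the form $([U,X]_{\mathfrak{l}},X)=0$ for all $X$ in the relevant subspace $\mathfrak{l}$, and then conclude by the polarization $X\mapsto X_1+X_2$.

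For the first assertion, fix $U\in\mathfrak{m}_1$ and $V\in\mathfrak{m}_2$, and apply Lemma~\ref{GO-criterion} to $X=V$: there is $H_V\in\mathfrak{h}$ with $([H_V+V,Y]_{\mathfrak{m}},V)=0$ for all $Y\in\mathfrak{m}$. Put $Y=U$. Since $[H_V,U]\in\mathfrak{m}_1$ and $\mathfrak{m}_1$ is orthogonal to $V\in\mathfrak{m}_2$, the isotropy term drops out and we are left with $([U,V]_{\mathfrak{m}},V)=0$. Replacing $V$ by $V_1+V_2$ (with $V_i\in\mathfrak{m}_2$) and dropping the two vanishing diagonal terms gives $([U,V_1]_{\mathfrak{m}},V_2)+([U,V_2]_{\mathfrak{m}},V_1)=0$; because $V_1,V_2\in\mathfrak{m}_2$, the $\mathfrak{m}_1$-parts of $[U,V_i]_{\mathfrak{m}}$ contribute nothing, so this is exactly $([U,V_1]_{\mathfrak{m}_2},V_2)+(V_1,[U,V_2]_{\mathfrak{m}_2})=0$, i.e. $\ad_U^{\mathfrak{m}_2}$ is skew-symmetric.

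Now assume $[\mathfrak{h},\mathfrak{m}_1]=0$, fix $U\in\mathfrak{m}_1$, and assume $U\neq 0$ (otherwise there is nothing to prove). Two ingredients are needed. First, applying Lemma~\ref{GO-criterion} to $X=U$ yields $H_U\in\mathfrak{h}$ with $([H_U+U,Y]_{\mathfrak{m}},U)=0$; since $[H_U,U]=0$ by hypothesis, this gives
\begin{equation*}
([U,Y]_{\mathfrak{m}},U)=0\qquad\text{for all }Y\in\mathfrak{m}.
\end{equation*}
Second, consider the $g$-orthogonal decomposition $\mathfrak{m}=\mathbb{R}U\oplus U^{\perp}$; both summands are $\Ad(H)$-invariant — $\mathbb{R}U$ because $[\mathfrak{h},U]=0$, and $U^{\perp}$ because $g$ is $\Ad(H)$-invariant — so by the first assertion, applied to this decomposition, $\ad_U^{U^{\perp}}$ is skew-symmetric. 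To finish, take an arbitrary $W\in\mathfrak{m}$ and write $W=\lambda U+Y$ with $Y\in U^{\perp}$; using $[U,U]=0$, then the displayed identity, then $[U,Y]_{\mathbb{R}U}\perp Y$ and skew-symmetry of $\ad_U^{U^{\perp}}$, one computes $([U,W]_{\mathfrak{m}},W)=([U,Y]_{\mathfrak{m}},Y)=(\ad_U^{U^{\perp}}Y,Y)=0$. Hence $\ad_U^{\mathfrak{m}}$ is skew-symmetric.

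The polarizations and the bookkeeping of $\mathfrak{m}$-components are routine. The step I expect to be the crux is the bootstrap in the second assertion: reducing skew-symmetry of $\ad_U^{\mathfrak{m}}$ to the already-proved two-block statement by taking $\mathbb{R}U$ as one of the blocks. This is precisely where the extra hypothesis $[\mathfrak{h},\mathfrak{m}_1]=0$ is indispensable — both to make $\mathbb{R}U$ an $\Ad(H)$-invariant line, and, more essentially, to remove the isotropy contribution $[H_U,U]$ and thereby obtain the diagonal relation $([U,Y]_{\mathfrak{m}},U)=0$ that neutralizes the cross terms between $\mathbb{R}U$ and $U^{\perp}$; without that hypothesis Lemma~\ref{GO-criterion} applied to $X=U$ gives only $([H_U+U,Y]_{\mathfrak{m}},U)=0$ with a genuinely interfering $H_U$.
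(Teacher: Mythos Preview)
Your proof is correct. The first assertion is handled exactly as in the paper: apply the GO-criterion to $X\in\mathfrak{m}_2$, plug in $Y=U$, and kill the isotropy term because $[H_X,U]\in\mathfrak{m}_1\perp X$.

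For the second assertion, however, you take a detour that the paper avoids. The paper simply observes that when $[\mathfrak{h},\mathfrak{m}_1]=0$, the \emph{same} one-line computation works for every $X\in\mathfrak{m}$ (not just $X\in\mathfrak{m}_2$): since $U\in\mathfrak{m}_1$, the isotropy term $[H_X,U]$ now vanishes identically, so $([X,U]_{\mathfrak{m}},X)=0$ for all $X\in\mathfrak{m}$ and $\ad_U^{\mathfrak{m}}$ is skew. Your bootstrap via the auxiliary decomposition $\mathfrak{m}=\mathbb{R}U\oplus U^{\perp}$ and a separate application of the GO-criterion at $X=U$ is valid but unnecessary; the extra hypothesis kills $[H_X,U]$ directly, not just $[H_U,U]$. (One small point: your inference ``$[H_U,U]=0$ gives $([U,Y]_{\mathfrak{m}},U)=0$'' implicitly uses that $\ad_{H_U}$ is skew on $(\mathfrak{m},(\cdot,\cdot))$, so that $([H_U,Y],U)=-(Y,[H_U,U])=0$; you might make that explicit.)
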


\begin{proof}
For any $X\in \mathfrak{m}_2$ there is $H_X \in \mathfrak{h}$ such
that $([H_X+X,Y]_{\mathfrak{m}},X) =0$ for all $Y\in \mathfrak{m}$
(see Lemma \ref{GO-criterion}). Therefore,
$$
0=([H_X+X,U]_{\mathfrak{m}},X)=([H_X,U]_{\mathfrak{m}},X)+([X,U]_{\mathfrak{m}},X)=([X,U]_{\mathfrak{m}},X),
$$
because $[H_X,U]\in \mathfrak{m}_1$ and $X\in \mathfrak{m}_2$. If $[\mathfrak{h},\mathfrak{m}_1]=0$, then
the same is true for any $X\in \mathfrak{m}$. This
proves the lemma.
\end{proof}

\begin{lemma}[\cite{Gor96}]\label{unimod}
Let $(M=G/H,g)$ be a GO-space, then the group $G$ is unimodular.
\end{lemma}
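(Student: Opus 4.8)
The plan is to prove directly that the vector $Z\in\mathfrak{m}$, characterized by $(Z,X)=\trace(\ad_X)$ for all $X\in\mathfrak{m}$, is zero; by the discussion preceding Lemma~\ref{GO-criterion} this is precisely the unimodularity of $G$. The first reduction is to pass to the $\mathfrak{m}$-block: for $V\in\mathfrak{m}$ one has $[V,\mathfrak{h}]\subseteq\mathfrak{m}$, so the $(\mathfrak{h},\mathfrak{h})$-block of $\ad_V$ vanishes and therefore $\trace(\ad_V)=\trace(\ad_V^{\mathfrak{m}})$. Taking $V=Z$ reduces the whole lemma to the single equality $(Z,Z)=\trace(\ad_Z^{\mathfrak{m}})=0$.

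To evaluate $\trace(\ad_Z^{\mathfrak{m}})$ I would fix a $g$-orthonormal basis $(X_i)$ of $\mathfrak{m}$ and apply the GO-criterion (Lemma~\ref{GO-criterion}) to each $X_i$: there is $H_i\in\mathfrak{h}$ with $([H_i+X_i,Y]_{\mathfrak{m}},X_i)=0$ for all $Y\in\mathfrak{m}$. Specializing to $Y=Z$, and using that $[H_i,Z]\in[\mathfrak{h},\mathfrak{m}]\subseteq\mathfrak{m}$ together with the skew-symmetry of $\ad_{H_i}$ on $\mathfrak{m}$ (invariance of the metric), the $\mathfrak{h}$-term rewrites as $([H_i,Z]_{\mathfrak{m}},X_i)=-(Z,[H_i,X_i])$, whence $([Z,X_i]_{\mathfrak{m}},X_i)=-(Z,[H_i,X_i])$. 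Summing over $i$ gives $\trace(\ad_Z^{\mathfrak{m}})=-\sum_i(Z,[H_i,X_i])$.

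The decisive step is that every term on the right-hand side vanishes: since $[H_i,X_i]\in\mathfrak{m}$, the defining property of $Z$ yields $(Z,[H_i,X_i])=\trace\bigl(\ad_{[H_i,X_i]}\bigr)$, and $\trace(\ad_{[A,B]})=\trace([\ad_A,\ad_B])=0$ for any $A,B\in\mathfrak{g}$. Hence $(Z,Z)=\trace(\ad_Z^{\mathfrak{m}})=0$, so $Z=0$ and $G$ is unimodular. I do not expect a real obstacle; the only point requiring attention is the bookkeeping of the $\mathfrak{h}$- and $\mathfrak{m}$-components — checking that $[H_i,Z]$ and $[H_i,X_i]$ genuinely lie in $\mathfrak{m}$, and that the off-diagonal blocks of $\ad_Z$ do not contribute to its trace — the rest being formal manipulation.
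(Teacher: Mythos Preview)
Your proof is correct. The bookkeeping is fine: $[\mathfrak{h},\mathfrak{m}]\subset\mathfrak{m}$ gives both $[H_i,Z],[H_i,X_i]\in\mathfrak{m}$, and the vanishing of the $\mathfrak{h}\to\mathfrak{h}$ block of $\ad_V$ for $V\in\mathfrak{m}$ is exactly what makes $\trace(\ad_V)=\trace(\ad_V^{\mathfrak{m}})$.

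Your route differs from the paper's. The paper argues by contradiction: it introduces the ideal $\mathfrak{u}=\{X\in\mathfrak{g}:\trace(\ad_X)=0\}$, notes that $\mathfrak{h}\subset\mathfrak{u}$ and $[\mathfrak{g},\mathfrak{g}]\subset\mathfrak{u}$, splits $\mathfrak{m}=\mathfrak{m}_1\oplus\mathfrak{m}_2$ with $\mathfrak{m}_1=\mathfrak{m}\cap\mathfrak{u}$, deduces $[\mathfrak{h},\mathfrak{m}_2]=0$, and then invokes Lemma~\ref{simple} to conclude that $\ad_Y^{\mathfrak{m}}$ is skew-symmetric for $Y\in\mathfrak{m}_2$, forcing $\trace(\ad_Y)=0$ --- a contradiction. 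Both arguments rest on the same two ingredients (the GO-criterion and $\trace(\ad_{[A,B]})=0$), but the paper packages them structurally via the ideal $\mathfrak{u}$ and the auxiliary Lemma~\ref{simple}, whereas you compute $(Z,Z)$ head-on by summing the GO-identity over an orthonormal basis. Your version is shorter and self-contained; the paper's version has the mild advantage of isolating Lemma~\ref{simple} as a reusable tool (it is used again later in the paper).
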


\begin{proof} Here we give a more direct proof, than the original one in \cite{Gor96}.
Suppose that the Lie algebra $\mathfrak{g}={\rm Lie} (G)$ is not unimodular and
consider its proper subspace
$$
\mathfrak{u}=\{X\in \mathfrak{g}\,|\, \trace(\ad_X)=0 \}.
$$
Obviously, $\mathfrak{h}\subset \mathfrak{u}$.
Since $\ad_{[X,Y]}=[\ad_X,\ad_Y]$ and $\trace(\ad_{[X,Y]})=\trace([\ad_X,\ad_Y])=0$, then
$[\mathfrak{u},\mathfrak{g}]\subset [\mathfrak{g},\mathfrak{g}]\subset \mathfrak{u}$, hence, $\mathfrak{u}$
is an ideal of $\mathfrak{g}$. Consider $\mathfrak{m}_1=\mathfrak{m}\cap \mathfrak{u}$ and let $\mathfrak{m}_2$
be a (non-trivial) $g$-orthogonal complement to $\mathfrak{m}_1$ in $\mathfrak{m}$.
Since $\mathfrak{u}$ is an ideal of $\mathfrak{g}$ and $g$ is $\ad(\mathfrak{h})$-invariant,
then $\mathfrak{m}_1$ and $\mathfrak{m}_2$ are
$\ad(\mathfrak{h})$-invariant. On the other hand,
$[\mathfrak{h},\mathfrak{m}_2]\subset [\mathfrak{g},\mathfrak{g}]\subset \mathfrak{u}$. Therefore,
$[\mathfrak{h},\mathfrak{m}_2]=0$.
Now, consider any non-trivial $Y\in \mathfrak{m}_2$. By our construction, $\trace (\ad_Y)\neq 0$.
On the other hand, by Lemma \ref{GO-criterion} for every $X\in \mathfrak{m}$
there is
$H_X \in \mathfrak{h}$ such
that $([H_X +X,Y]_{\mathfrak{m}},X) =0$. Since $[\mathfrak{h},\mathfrak{m}_2]=0$, we get
$([X,Y]_{\mathfrak{m}},X) =0$ (see also Lemma \ref{simple}), that implies $\trace(\ad_Y)=0$, a contradiction.
\end{proof}

\begin{lemma}\label{geodkill}
Let $(M,g)$ be a Riemannian manifold, $X$ a Killing field on
$(M,g)$. Consider any point $x\in M$ such that $X(x)\neq 0$. Then
the integral curve of $X$ through the point $x$ is a geodesic if
and only if $x$ is a critical point of the function $y\in M\mapsto
g_y(X,X)$.
\end{lemma}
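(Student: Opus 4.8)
The plan is to work directly with the formula for the covariant derivative of a Killing field and relate it to the differential of the length-squared function. First I would recall that for a Killing field $X$ on $(M,g)$, the function $f(y) := g_y(X,X)$ satisfies, for any vector $v \in T_yM$,
\[
\tfrac{1}{2}\, df_y(v) = g_y(\nabla_v X, X) = -\,g_y(\nabla_X X, v),
\]
where the last equality uses the skew-symmetry of $v \mapsto \nabla_v X$ (which is exactly the Killing equation $g(\nabla_v X, w) + g(\nabla_w X, v) = 0$, applied with $w = X$). Hence $df_y = 0$ if and only if $\nabla_X X$ vanishes at $y$, at least whenever $X(y) \neq 0$ so that we are not dividing by zero; more precisely, $df_y$ is (up to the factor $-2$) the $g$-dual of $(\nabla_X X)(y)$, so $x$ is a critical point of $f$ exactly when $(\nabla_X X)(x) = 0$.

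Next I would connect $(\nabla_X X)(x) = 0$ to the integral curve of $X$ through $x$ being a geodesic. Let $\gamma(t)$ be the integral curve of $X$ with $\gamma(0) = x$, so $\dot\gamma(t) = X(\gamma(t))$. Then the geodesic equation for $\gamma$ reads $\nabla_{\dot\gamma}\dot\gamma = 0$, i.e. $(\nabla_X X)(\gamma(t)) = 0$ for all $t$. So one direction is immediate: if the integral curve is a geodesic, then in particular $(\nabla_X X)(x) = 0$, hence $df_x = 0$ and $x$ is a critical point. For the converse I need that $(\nabla_X X)(x) = 0$ at the single point $x$ forces $\nabla_X X \equiv 0$ along the whole integral curve. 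The cleanest way is to observe that $f = g(X,X)$ is constant along any integral curve of $X$ (because $X$ generates isometries, which preserve both $g$ and $X$ itself — $X$ is invariant under its own flow); therefore $df$ vanishes at one point of the integral curve iff it vanishes at every point of it, and by the computation above this is equivalent to $\nabla_X X$ vanishing identically along the curve, which is the geodesic equation. Since $X(x) \neq 0$, the integral curve is a regular curve near $x$, and vanishing of $\nabla_{\dot\gamma}\dot\gamma$ means it is (a reparametrization-free) geodesic.

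I would organize the write-up as: (1) state the identity $\tfrac12 df_y = g_y(\nabla_\cdot X, X) = -g_y(\nabla_X X, \cdot)$ and note it identifies critical points of $f$ with zeros of $\nabla_X X$ at points where $X \neq 0$; (2) note $f$ is constant along integral curves of $X$ by isometry-invariance; (3) combine: $x$ critical $\iff$ $df \equiv 0$ along the integral curve $\iff$ $\nabla_X X \equiv 0$ along it $\iff$ the curve is a geodesic. The main obstacle — really the only subtle point — is making precise why a single critical point propagates along the integral curve; this is handled by step (2), the invariance of $f$ under the flow of $X$, so there is no serious analytic difficulty. One should also be slightly careful that $X \neq 0$ in a neighborhood of $x$ (which holds by continuity) so that the integral curve is an honest embedded arc there and the notion of "geodesic" is unambiguous.
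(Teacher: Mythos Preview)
Your argument is correct. The paper itself gives no proof of this lemma; it simply refers to Proposition~5.7 of Chapter~VI in Kobayashi--Nomizu, where essentially the same computation (using the Killing equation to identify $\tfrac{1}{2}\,df$ with $-g(\nabla_X X,\cdot)$) appears. So you have supplied what the paper only cites.

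One small point about your step~(2): the statement ``$f$ is constant along the integral curve'' is not by itself strong enough to conclude that critical points propagate along the curve (a function can be constant along a curve and still have $df\neq 0$ at points of it). What you actually need---and what your parenthetical reasoning in fact establishes---is the stronger statement that $f$ is invariant under the full flow $\phi_t$ of $X$, i.e.\ $f\circ\phi_t=f$, because $\phi_t$ preserves both $g$ and $X$. Pulling back gives $(\phi_t)^*df=df$, whence $df_x=0$ implies $df_{\phi_t(x)}=0$ for every $t$. With that wording adjusted, the write-up is clean. The remark about ``not dividing by zero'' is unnecessary: the identity $\tfrac{1}{2}\,df_y=-g_y(\nabla_X X,\cdot)$ holds at every point; the hypothesis $X(x)\neq 0$ is used only, as you note at the end, to ensure the integral curve is a genuine curve.
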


\begin{proof}
In fact, this is proved in
Proposition 5.7  of Chapter VI in \cite{KN}.
\end{proof}

\smallskip

We use the symbol $M_x$ for the tangent space of a manifold $M$ at a point $x\in M$.

\begin{lemma}\label{newcrit}
Let $(M,g)$ be a Riemannian manifold, $\mathfrak{g}$ its Lie algebra of Killing field. Then $(M,g)$
is a GO-manifold if and only if for any $x\in M$ and any $v\in M_x$ there is
$X\in \mathfrak{g}$ such that $X(x)=v$ and $x$ is a critical point of the function
$y\in M\mapsto g_y(X,X)$. If $(M,g)$ is homogeneous, then the latter condition is equivalent
to the following one: for any $Y\in \mathfrak{g}$ the equality $g_x([Y,X],X)=0$ holds.
\end{lemma}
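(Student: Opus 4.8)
The plan is to prove Lemma \ref{newcrit} in three pieces, moving back and forth between the "integral curves are geodesics" picture and the infinitesimal condition supplied by Lemma \ref{GO-criterion}. First, for the general (not necessarily homogeneous) statement: a manifold is a GO-manifold precisely when through every point $x$ and in every tangent direction $v\in M_x$ there is a geodesic that is the orbit of a one-parameter isometry group, i.e.\ the integral curve of some Killing field $X$ with $X(x)=v$. Combining this with Lemma \ref{geodkill}, which says that such an integral curve through $x$ (with $X(x)\neq 0$) is a geodesic exactly when $x$ is a critical point of $y\mapsto g_y(X,X)$, gives immediately the first equivalence. The one caveat to handle is the direction $v=0$: one should remark that the constant curve is trivially a geodesic, or simply note that the $v=0$ case is vacuous/automatic, so the criterion needs to be checked only for $v\neq 0$.

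Second, I would translate "$x$ is a critical point of $f(y)=g_y(X,X)$'' into the stated bracket condition under the homogeneity hypothesis. The function $f$ is smooth, and its differential at $x$ in a direction $w\in M_x$ can be computed by choosing a Killing field $Y$ with $Y(x)=w$ (possible since $(M,g)$ is homogeneous, so the Killing fields span each tangent space) and differentiating $g(X,X)$ along the flow $\varphi_t$ of $Y$. Because $\varphi_t$ is an isometry, $g(X,X)\circ\varphi_t = g\big((\varphi_{-t})_*X,(\varphi_{-t})_*X\big)$, and $\frac{d}{dt}\big|_{0}(\varphi_{-t})_*X = [Y,X]$ (the Lie derivative $\mathcal{L}_Y X$), so
\begin{equation}\label{eq:dfplan}
df_x(w) = 2\,g_x\big([Y,X],X\big).
\end{equation}
Hence $x$ is a critical point of $f$ iff $g_x([Y,X],X)=0$ for all $Y\in\mathfrak{g}$ (since such $Y$ realize every $w\in M_x$). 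This is exactly the asserted equivalent condition.

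The main obstacle, and the step deserving the most care, is the sign/constant bookkeeping in \eqref{eq:dfplan} and making sure the Lie-derivative computation is carried out with a consistent convention for the bracket of Killing fields versus the Lie algebra bracket on $\mathfrak{g}$ (these differ by a sign in some conventions, but since the condition is ``$g_x([Y,X],X)=0$'', the sign is harmless and I would just note this). A secondary technical point is the reference-level justification that in a homogeneous Riemannian manifold the evaluation map $\mathfrak{g}\to M_x$, $Y\mapsto Y(x)$, is surjective for every $x$ — this is standard and can be cited, but it is what lets us pass from ``for all directions $w$'' to ``for all $Y\in\mathfrak{g}$''. With these conventions fixed, the proof is short: the first equivalence is Lemma \ref{geodkill} plus the definition of GO-manifold, and the second is the one-line differentiation in \eqref{eq:dfplan}.
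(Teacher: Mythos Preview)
Your proposal is correct and follows essentially the same route as the paper: the first equivalence is immediate from Lemma~\ref{geodkill} together with the definition of a GO-manifold, and the second is the one-line computation $Y\cdot g(X,X)|_x = 2\,g_x([Y,X],X)$ (using that $Y$ is Killing), exactly as the paper does. Your additional remarks on the $v=0$ case, sign conventions, and surjectivity of $\mathfrak{g}\to M_x$ only make the argument more explicit; note that despite your opening reference to Lemma~\ref{GO-criterion}, your actual argument (like the paper's) does not use it and relies only on Lemma~\ref{geodkill}.
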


\begin{proof}
By Lemma \ref{geodkill} an integral curve of $X$ through the point $x\in M$ is geodesic if and only if
$x$ is a critical point of the function $y\in M\mapsto g_y(X,X)$.
If $(M,g)$ is homogeneous, then it is equivalent to the condition
$$
Y\cdot g(X,X)|_x=2g_x([Y,X],X)=0
$$
for every $Y \in \mathfrak{g}$.
\end{proof}

In what follows we need the following

\begin{prop}[Theorem 2.10 in \cite{YaBo}]\label{Bo}
If a Killing vector field $X$ on a compact Riemannian manifold $M$
satisfies the condition $\Ric(X,X)\leq 0$, then $X$ is parallel on
$M$ and $\Ric(X,X)\equiv 0$.
\end{prop}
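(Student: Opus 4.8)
The plan is to prove this by the classical Bochner technique, independently of any geodesic orbit structure. First I would record the two elementary facts attached to a Killing field $X$: the endomorphism $A=\nabla X$ given by $A(Y)=\nabla_Y X$ is skew-symmetric, which is precisely the Killing equation $g(\nabla_Y X,Z)+g(\nabla_Z X,Y)=0$, and hence $\operatorname{div} X=\trace A=0$. I would then recall the second-order identity enjoyed by every Killing field,
$$\nabla_Y\nabla_Z X-\nabla_{\nabla_Y Z}X=-R(X,Y)Z,$$
which one obtains by differentiating the Killing equation twice and applying the first Bianchi identity.

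Next I would trace this identity over a local $g$-orthonormal frame $(e_i)$. Writing the rough Laplacian as $\nabla^*\nabla X=-\sum_i(\nabla_{e_i}\nabla_{e_i}X-\nabla_{\nabla_{e_i}e_i}X)$, the trace yields the Weitzenb\"ock-type relation $g(\nabla^*\nabla X,X)=\Ric(X,X)$. Combining this with the purely pointwise identity $\tfrac12\Delta|X|^2=|\nabla X|^2-g(\nabla^*\nabla X,X)$, valid for any vector field with $\Delta=\operatorname{div}\operatorname{grad}$, produces the Bochner formula
$$\tfrac12\,\Delta\,g(X,X)=|\nabla X|^2-\Ric(X,X).$$

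Finally I would integrate over the compact manifold $M$. Since $\int_M\Delta f\,\vol=0$ for any smooth function on a closed manifold, integration gives $\int_M|\nabla X|^2\,\vol=\int_M\Ric(X,X)\,\vol$. The left-hand side is nonnegative while the hypothesis $\Ric(X,X)\le 0$ makes the right-hand side nonpositive, so both integrals vanish. Vanishing of the first forces $\nabla X\equiv 0$, i.e. $X$ is parallel, and vanishing of the second together with the pointwise sign condition forces $\Ric(X,X)\equiv 0$.

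I expect the main obstacle to be the bookkeeping in the curvature identity and its trace: one must fix consistent conventions for the sign of the curvature tensor $R$, for the Ricci contraction $\Ric$, and for the rough versus the divergence-form Laplacian, because a single misplaced sign flips the final inequality and destroys the conclusion. Once those conventions are pinned down, the derivation of $g(\nabla^*\nabla X,X)=\Ric(X,X)$ and the concluding integration by parts are routine.
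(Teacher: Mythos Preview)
Your argument is the classical Bochner proof and is correct; the only caveat you yourself flag --- fixing the sign conventions so that tracing the Kostant identity $\nabla^2_{Y,Z}X=-R(X,Y)Z$ really yields $g(\nabla^*\nabla X,X)=\Ric(X,X)$ --- is indeed the only place where care is needed, and with your stated conventions the computation goes through.

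As for comparison: the paper does not give its own proof of this proposition at all. It is quoted as Theorem~2.10 from Yano--Bochner \cite{YaBo} and used as a black box (only its Corollary~\ref{Bo1} is invoked later). Your proposal is precisely the standard Bochner-technique argument one would find in that reference, so there is no alternative approach to compare against.
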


\begin{corollary}\label{Bo1}
If a compact homogeneous Rimannian manifold $(M,g)$ has nonpositive
Ricci curvature, then it is an Euclidean torus. In particular, its full
connected isometry group is abelian.
\end{corollary}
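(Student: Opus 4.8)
The goal is to prove Corollary \ref{Bo1}: a compact homogeneous Riemannian manifold $(M,g)$ with $\Ric \leq 0$ is a flat torus, with abelian full connected isometry group. The plan is to feed the homogeneity into Proposition \ref{Bo}. Since $(M,g)$ is homogeneous, its isometry group $G$ acts transitively, so for any $v \in T_oM$ there is a Killing field $X$ with $X(o) = v$. The hypothesis $\Ric \leq 0$ gives $\Ric(X,X) \leq 0$ everywhere, so Proposition \ref{Bo} applies and tells us $X$ is parallel and $\Ric(X,X) \equiv 0$. Ranging over all $v \in T_oM$ and using homogeneity, I would conclude that every tangent vector extends to a parallel Killing field, hence the tangent bundle is globally parallelizable by parallel vector fields and $\Ric \equiv 0$.

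Next I would argue flatness. With a basis $v_1, \dots, v_n$ of $T_oM$ extended to parallel Killing fields $X_1, \dots, X_n$, parallelism forces these to be pointwise linearly independent everywhere (their pairwise inner products are constant, being parallel), so they form a global parallel frame. A manifold admitting a global parallel orthonormal frame has vanishing curvature: $R(X_i,X_j)X_k = \nabla_{X_i}\nabla_{X_j}X_k - \nabla_{X_j}\nabla_{X_i}X_k - \nabla_{[X_i,X_j]}X_k = 0$ since each $\nabla_\cdot X_k = 0$. Hence $(M,g)$ is flat, and being compact and homogeneous (so complete), it is a flat torus by the Bieberbach theory / the classification of compact flat manifolds — actually more directly, a compact flat \emph{homogeneous} manifold is a torus, since the parallel Killing fields commute (see below) and generate a transitive torus action.

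For the statement about the isometry group, I would show the Lie algebra $\mathfrak{g}$ of Killing fields is abelian. Any Killing field lies in $\mathfrak{g}$; I claim every element of $\mathfrak{g}$ is parallel. Indeed, by Lemma \ref{geodkill} or directly, on a compact manifold the function $y \mapsto g_y(X,X)$ attains a maximum, and one can also note: a Killing field $X$ satisfies the Bochner formula, and with $\Ric(X,X) \leq 0$, Proposition \ref{Bo} gives that $X$ is parallel provided $\Ric(X,X) \le 0$ — but we only know this for the specific spanning fields. To handle an arbitrary Killing field, observe $\Ric \equiv 0$ was already established, so $\Ric(X,X) \equiv 0 \le 0$ for \emph{every} Killing field $X$, and Proposition \ref{Bo} then says every Killing field is parallel. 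Two parallel vector fields $X, Y$ satisfy $\nabla_X Y = \nabla_Y X = 0$, so $[X,Y] = \nabla_X Y - \nabla_Y X = 0$; thus $\mathfrak{g}$ is abelian, and the full connected isometry group $G$ is abelian. A connected abelian Lie group acting transitively and effectively (we may pass to the effective quotient) on a compact manifold is a torus, so $M$ is a torus; being flat, it is a Euclidean torus.

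The main obstacle is the bookkeeping around "which Killing fields are parallel": one must first use homogeneity to get enough parallel fields to force $\Ric \equiv 0$, and only then can Proposition \ref{Bo} be applied to \emph{all} Killing fields to conclude the whole isometry algebra is abelian. A secondary point needing care is deducing "compact flat homogeneous $\Rightarrow$ Euclidean torus" — this follows cleanly once we know the commuting parallel fields span each tangent space, since their flows generate a transitive action of $\mathbb{R}^n$ whose isotropy is a lattice, exhibiting $M$ as $\mathbb{R}^n/\Lambda$ with the flat metric.
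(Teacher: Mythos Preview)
Your argument is correct. The paper does not actually supply a proof of Corollary~\ref{Bo1}; it is stated as an immediate consequence of Proposition~\ref{Bo}, and your derivation---using homogeneity to produce enough Killing fields to feed into Proposition~\ref{Bo}, concluding flatness and $\Ric\equiv 0$, then reapplying Proposition~\ref{Bo} to all Killing fields to get an abelian isometry algebra and hence a torus---is precisely the standard way to unpack that implication.
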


\section{Main results}

At first, we get the following remarkable result.

\begin{theorem}\label{ideal}
Let $(M,g)$ be a GO-manifold, $\mathfrak{g}$ is its Lie algebra of Killing fields.
Suppose that $\mathfrak{a}$ is an abelian ideal of $\mathfrak{g}$. Then any $X\in \mathfrak{a}$
has constant length on $(M,g)$.
\end{theorem}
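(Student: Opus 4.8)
The plan is to use the GO-criterion in the form of Lemma \ref{newcrit} together with the fact that $\mathfrak{a}$ is an abelian ideal. Fix a point $x\in M$ and a Killing field $X\in\mathfrak{a}$. By Lemma \ref{newcrit}, to show that $x$ is a critical point of $y\mapsto g_y(X,X)$ for \emph{every} $X$ (which, ranging $x$ over $M$, gives that $g(X,X)$ is constant), it suffices to exhibit, for the vector $v=X(x)\in M_x$, some Killing field $\widetilde X\in\mathfrak{g}$ with $\widetilde X(x)=v$ and $g_x([Y,\widetilde X],\widetilde X)=0$ for all $Y\in\mathfrak{g}$. The natural candidate is $\widetilde X=X$ itself, but a priori $X$ need not be geodesic at $x$; so the first step is to produce, using the GO-property at $x$ and $v$, \emph{some} $\widetilde X\in\mathfrak{g}$ agreeing with $X$ at $x$ and satisfying $g_x([Y,\widetilde X],\widetilde X)=0$ for all $Y$. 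Then the task is to transfer the criticality from $\widetilde X$ to $X$.

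The key point will be to compare $X$ and $\widetilde X$ using that $\mathfrak{a}$ is an abelian ideal. Write $\widetilde X = X + W$ where $W\in\mathfrak{g}$ and $W(x)=0$. For $Z\in\mathfrak{a}$ we have $[W,Z]\in\mathfrak{a}$ (ideal) and we want to understand the length function along directions in $\mathfrak{a}$. The step I would carry out is: evaluate the criticality condition $g_x([Y,\widetilde X],\widetilde X)=0$ with $Y$ ranging over $\mathfrak{a}$, using $[Y,X]=0$ for $Y,X\in\mathfrak{a}$ (abelian), to get $g_x([Y,W],X)+g_x([Y,W],W)$-type terms; since $W(x)=0$, terms that are pointwise evaluations of $W$ at $x$ drop, and since $[\mathfrak{a},\mathfrak{a}]=0$ the commutators among elements of $\mathfrak{a}$ vanish. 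Pushing this through should show that the derivative of $g(X,X)$ at $x$ in the direction of any Killing field lying in $\mathfrak{a}$ vanishes. The remaining directions in $\mathfrak{g}$ are handled by combining this with homogeneity: the orbit of the isometry group is all of $M$, and $g(X,X)$ restricted to an $\mathfrak{a}$-orbit is, by the above, locally constant. Since $\mathfrak{a}$ is an ideal, conjugation by the full group $G$ moves $X$ within $\mathfrak{a}$ (acts on $\mathfrak{a}$), so that the function $g(X,X)$ at a point $gx$ equals $g(\mathrm{Ad}(g^{-1})X,\mathrm{Ad}(g^{-1})X)$ at $x$, reducing all of $M$ to behaviour at the single point $x$ along $\mathfrak{a}$-directions.

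A cleaner route, which I would actually pursue, is: let $\mu=\sup_{y\in M} g_y(X,X)$ and pick a sequence (or, if attained, a point) where $g(X,X)$ is near-maximal; at an interior maximum $x_0$ (existence is the delicate issue if $M$ is noncompact, but one can argue along an $\mathfrak{a}$-orbit where the length function is smooth and bounded, or reduce to the compact case by Corollary \ref{Bo1}-type considerations applied to the closure of the relevant torus), Lemma \ref{geodkill} gives that the integral curve of $X$ through $x_0$ is a geodesic, hence an orbit of a one-parameter subgroup; since $X\in\mathfrak{a}$ generates such a subgroup and $\mathfrak{a}$ is abelian, the field $X$ is invariant under its own flow and under the flows of all of $\mathfrak{a}$, so $g(X,X)$ is constant along the whole $\mathfrak{a}$-orbit of $x_0$; then the ideal property and homogeneity spread this to all of $M$.

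The main obstacle I expect is \textbf{noncompactness}: Lemma \ref{geodkill} and the maximum-principle idea need a critical point of $g(X,X)$ to exist, which is automatic only on compact $M$. The fix will be to work intrinsically with Lemma \ref{newcrit} (which needs no compactness), carefully exploiting $[\mathfrak{a},\mathfrak{a}]=0$ and the ideal property to show the derivative of $g(X,X)$ vanishes in \emph{all} Killing directions at \emph{every} point — the abelian condition is exactly what kills the cross-terms $g_x([Y,X],X)$ when $Y\in\mathfrak{a}$, and the ideal condition lets $\mathrm{Ad}(G)$ permute $\mathfrak a$ so that checking one point suffices — rather than relying on extremal points.
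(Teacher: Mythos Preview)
Your setup matches the paper's: use Lemma~\ref{newcrit}, produce the GO-field $Z\in\mathfrak{g}$ with $Z(x)=X(x)$ and $g_x([V,Z],Z)=0$ for all $V$, and study $W=X-Z$, which vanishes at $x$. The directions $Y\in\mathfrak{a}$ are indeed trivial (just $[\mathfrak{a},\mathfrak{a}]=0$; the field $Z$ is not even needed there). The gap is in how you treat the remaining directions. Your proposed reduction---$g_{gx}(X,X)=g_x(\Ad(g^{-1})X,\Ad(g^{-1})X)$ together with ``$\Ad(G)$ preserves $\mathfrak{a}$, so checking one point suffices''---is circular: what you would need is that the quadratic form $X\mapsto g_x(X,X)$ on $\mathfrak{a}$ is $\Ad(G)$-invariant, and that statement is \emph{equivalent} to the theorem. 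Vanishing of the derivative of $g(X,X)$ in $\mathfrak{a}$-directions only yields $\Ad(A)$-invariance of this form, which is automatic since $A$ is abelian. The maximum-principle variant has the same defect: even granting a critical point $x_0$, you only get constancy along the $A$-orbit of $x_0$, and spreading this to all of $M$ runs into the identical circularity.

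What is missing is a direct pointwise computation for those $Y\in\mathfrak{g}$ with $Y(x)\perp\{U(x):U\in\mathfrak{a}\}$ in $M_x$. Since $W(x)=0$, one has $g_x([Y,X],X)=g_x([Y,Z+W],Z)=g_x([Y,Z],Z)+g_x([Y,W],Z)=g_x([Y,W],Z)$. Now comes the key step you did not reach: because $W(x)=0$, the flow of $W$ fixes $x$ and hence $0=W\!\cdot\! g(Y,Z)\big|_x=g_x([W,Y],Z)+g_x(Y,[W,Z])$; but $[W,Z]=[X-Z,Z]=[X,Z]\in\mathfrak{a}$ (this is exactly where the \emph{ideal} hypothesis is used), and $Y(x)\perp\mathfrak{a}(x)$ by assumption, so $g_x(Y,[W,Z])=0$ and therefore $g_x([Y,X],X)=0$. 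Since every $Y\in\mathfrak{g}$ decomposes as a field in $\mathfrak{a}$ plus one whose value at $x$ is orthogonal to $\mathfrak{a}(x)$, this handles all $Y$ at the arbitrary point $x$. No compactness, no extremal point, and no global $\Ad$-invariance argument is required.
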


\begin{proof}
Let $x$ be any point in $M$. We will prove that $x$ is a critical
point of the function $y\in M\mapsto g_y(X,X)$. Since $(M,g)$ is
homogeneous, then (by Lemma \ref{newcrit}) it suffices to prove
that $g_x([Y,X],X)=0$ for every $Y \in \mathfrak{g}$.

Consider any $Y\in \mathfrak{a}$, then $Y\cdot g(X,X)=2g([Y,X],X)=0$ on $M$, since $\mathfrak{a}$ is abelian.

Now, consider $Y\in \mathfrak{g}$ such that $g_x(Y,U)=0$ for every
$U \in \mathfrak{a}$. We will prove that $g_x([Y,X],X)=0$. By
Lemma \ref{newcrit}, for the vector $X(x)\in M_x$ there is a
Killing field $Z\in \mathfrak{g}$ such that $Z(x)=X(x)$ and
$g_x([V,Z],Z)=0$ for any $V\in \mathfrak{g}$. In particular,
$g_x([Y,Z],Z)=0$. Now, $W=X-Z$ vanishes at $x$ and we get
$$
g_x([Y,X],X)=g_x([Y,Z+W],Z+W)=g_x([Y,Z+W],Z)=
$$
$$
g_x([Y,Z],Z)+g_x([Y,W],Z)=g_x([Y,W],Z).
$$
Note that $g_x([Y,W],Z)=-g_x([W,Y],Z)=g_x(Y,[W,Z])=0$ because
$W(x)=0$ ($0=W\cdot g(Y,Z)|_x=g_x([W,Y],Z)+g_x(Y,[W,Z])$) and
$[W,Z]=[X,Z] \in \mathfrak{a}$. Therefore, $g_x([Y,X],X)=0$.
Hence, $x$ is a critical point of the function $y\in M\mapsto
g_y(X,X)$.

Since every $x\in M$ is a critical
point of the function $y\in M\mapsto g_y(X,X)$, then
$X$ has constant length on $(M,g)$.
\end{proof}

\begin{remark}\label{rem1}
This result can be easily generalized to some cases when
$\mathfrak{g}$ is a subalgebra of the Lie algebra of the full
connected isometry group of $(M,g)$. It suffices that a connected
subgroup $G$ (with the Lie algebra $\mathfrak{g}$) of the full
isometry group of $(M,g)$ is such that $(M=G/H,g)$ is a GO-space.
\end{remark}

In the rest of the paper
we reprove the following

\begin{theorem}[C.~Gordon \cite{Gor96}]\label{gordonnonpos}
Every Riemannian GO-manifold of nonpositive Ricci curvature is symmetric.
\end{theorem}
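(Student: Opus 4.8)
The plan is to combine Theorem~\ref{ideal} with the structure theory of solvable radicals and Corollary~\ref{Bo1}. Let $(M,g)$ be a GO-manifold with nonpositive Ricci curvature and let $\mathfrak{g}$ be its Lie algebra of Killing fields, so $M=G/H$ with $G$ the full connected isometry group. First I would argue that $\mathfrak{g}$ cannot be too large: by Lemma~\ref{unimod} the group $G$ is unimodular, so in the Ricci formula \eqref{ricc} the vector $Z$ vanishes, giving
$$
\Ric(X,X)=-\tfrac12 B_{\mathfrak{g}}(X,X)-\tfrac12\sum_i|[X,X_i]_{\mathfrak{m}}|^2+\tfrac14\sum_{i,j}([X_i,X_j]_{\mathfrak{m}},X)^2
$$
for $X\in\mathfrak{m}$. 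The hypothesis $\Ric\le 0$ forces strong rigidity through this identity.

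Next I would locate an abelian ideal to feed into Theorem~\ref{ideal}. Consider the solvable radical $\mathfrak{r}$ of $\mathfrak{g}$; its commutator series terminates, so the last nonzero term $\mathfrak{a}=\mathfrak{r}^{(k)}$ is a nonzero abelian ideal of $\mathfrak{g}$ (unless $\mathfrak{r}=0$, i.e.\ $\mathfrak{g}$ semisimple, which I treat separately). By Theorem~\ref{ideal} every $X\in\mathfrak{a}$ has constant length on $(M,g)$; by Lemma~\ref{geodkill} the integral curves of such $X$ are geodesics, and a standard computation shows a constant-length Killing field $X$ satisfies $\nabla_XX=0$ and $|\nabla X|^2=-\Ric(X,X)\ge 0$ pointwise, together with the Bochner-type identity $\tfrac12\Delta|X|^2 = |\nabla X|^2+\Ric(X,X)$; since $|X|^2$ is constant this gives $|\nabla X|^2=-\Ric(X,X)$, and combined with $\Ric\le 0$ and the fact that $|\nabla X|^2\ge 0$ we get nothing for free unless we also know $\Ric(X,X)\ge 0$. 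So the key point is to show $\Ric(X,X)= 0$ for $X\in\mathfrak{a}$, whence $\nabla X=0$, i.e.\ $X$ is parallel. For this I would show $\mathfrak{a}$ acts on $\mathfrak{m}$ in a way that makes its contribution to $\Ric$ nonnegative: since $\mathfrak{a}$ is abelian and (by Lemma~\ref{simple}, applied to the orthogonal $\Ad(H)$-invariant splitting determined by $\mathfrak{a}\cap\mathfrak{m}$ and its complement, using $[\mathfrak{h},\mathfrak{a}\cap\mathfrak{m}]\subset\mathfrak{a}$) the operators $\ad_U^{\mathfrak{m}}$ for $U\in\mathfrak{a}\cap\mathfrak{m}$ are skew-symmetric with image in $\mathfrak{a}\cap\mathfrak{m}$, the Killing form term and the bracket terms in the displayed formula conspire so that $\Ric(X,X)\ge 0$ on $\mathfrak{a}$; comparing with $\Ric\le 0$ yields $\Ric(X,X)\equiv 0$ and $\nabla X=0$.

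From parallelism I would run a de~Rham–type reduction: the span of the values $\{X(o):X\in\mathfrak{a}\}$ is a parallel distribution, so $(M,g)$ splits as a Riemannian product $M=M_0\times M_1$ with $M_0$ the (flat, since spanned by commuting parallel fields) Euclidean factor carrying $\mathfrak{a}$, and $M_1$ a GO-manifold of nonpositive Ricci curvature whose isometry Lie algebra has strictly smaller solvable radical (the image of $\mathfrak{r}$ modulo $\mathfrak{a}$). By induction on $\dim\mathfrak{r}$ — using that any direct factor of a GO-manifold is again a GO-manifold, as noted after Lemma~\ref{GO-criterion} — $M_1$ is symmetric, and a product of a Euclidean space with a symmetric space is symmetric, so $M$ is symmetric. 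The base case is $\mathfrak{r}=0$: then $\mathfrak{g}$ is semisimple and unimodular with $H$ compact; here I would invoke that a homogeneous space of a semisimple group with $\Ric\le 0$ must in fact be Ricci-flat along $\mathfrak{m}$, force $B_{\mathfrak{g}}|_{\mathfrak{m}}\ge 0$ via the same formula, conclude $\mathfrak{g}$ is compact, hence $M$ is compact with $\Ric\le 0$, and apply Corollary~\ref{Bo1} to get that $M$ is a torus — contradicting semisimplicity unless $M$ is a point, so this degenerate case is vacuous or trivial.

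The main obstacle I anticipate is the step showing $\Ric(X,X)\ge 0$ for $X$ in the abelian ideal: one must carefully track how $\ad_X$ acts on the orthogonal complement of $\mathfrak{a}\cap\mathfrak{m}$ in $\mathfrak{m}$ and how the cross terms $([X_i,X_j]_{\mathfrak{m}},X)$ behave, using skew-symmetry from Lemma~\ref{simple} and the Jacobi identity, so that the positive quartic term in \eqref{ricc} dominates or cancels the negative quadratic terms precisely when $X\in\mathfrak{a}$. Once parallelism is in hand, the de~Rham splitting and the inductive descent are routine given the hereditary properties of GO-manifolds already recorded in the excerpt.
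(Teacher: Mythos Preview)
Your reduction to the semisimple case is close in spirit to the paper's (the paper also splits off a Euclidean factor via Theorem~\ref{ideal} and then treats the semisimple case), but both halves of your argument have genuine problems.

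\medskip

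\textbf{The parallelism step.} Your Bochner computation has the sign reversed. For a Killing field one has $\operatorname{div}(\nabla_X X)=|\nabla X|^2-\Ric(X,X)$; when $|X|$ is constant, $\nabla_X X=0$ and hence $|\nabla X|^2=\Ric(X,X)$, not $-\Ric(X,X)$. With the correct sign, $\Ric\le 0$ immediately gives $\nabla X=0$, so your detour through an algebraic estimate ``$\Ric(X,X)\ge 0$ on $\mathfrak{a}$'' is unnecessary (and you did not actually carry it out). The paper simply quotes this parallelism result from \cite{BerNikKF}.

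\medskip

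\textbf{The semisimple base case.} This is where your proposal breaks down entirely. You claim that for semisimple $\mathfrak{g}$ the hypothesis $\Ric\le 0$ forces $B_{\mathfrak{g}}|_{\mathfrak{m}}\ge 0$, hence $\mathfrak{g}$ compact, hence $M$ compact, hence by Corollary~\ref{Bo1} a torus, which is absurd. But this is false already for the prototype examples: every symmetric space of noncompact type (e.g.\ $\mathbb{H}^n=SO_0(n,1)/SO(n)$) is a GO-manifold with strictly negative Ricci curvature and semisimple, \emph{noncompact} isometry algebra. Your chain of implications would exclude precisely the spaces the theorem is meant to produce. The semisimple case is the substantive part of the theorem, not a degenerate endpoint.

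The paper's treatment of the semisimple case is quite different and is where the real work lies: one takes a maximal compactly embedded subalgebra $\mathfrak{k}\supset\mathfrak{h}$, writes $\mathfrak{m}=\mathfrak{m}_1\oplus\mathfrak{m}_2$ with $\mathfrak{k}=\mathfrak{h}\oplus\mathfrak{m}_1$, and compares the Ricci curvature of $(M,g)$ with that of the auxiliary \emph{compact} homogeneous space $(K^*/H^*,g^*)$ built from $(\mathfrak{k},\mathfrak{h})$. A calculation using Lemma~\ref{simple} (Proposition~\ref{eneq}) shows $\Ric^*\le \Ric\le 0$ on $\mathfrak{m}_1$, so Corollary~\ref{Bo1} applies to $K^*/H^*$ and forces it to be a torus; unwinding this via the Cartan decomposition $\mathfrak{g}=\mathfrak{k}\oplus\mathfrak{p}$ shows $\mathfrak{m}_1=0$, i.e.\ $\mathfrak{h}=\mathfrak{k}$, which is exactly the condition for $G/H$ to be symmetric. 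You should replace your base-case paragraph with an argument along these lines.
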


\begin{remark}
It should be noted that the original proof of this theorem
(Theorem 5.1 in \cite{Gor96}) has an error in the claim ``Since
$U^*/L^*$ is a compact homogeneous space, its Ricci curvature
$\Ric^*$ is nonnegative''. Nevertheless, this error could be
corrected, and the proof in \cite{Gor96} requires only a little
modification. But here we present a more simple proof, in which
some constructions from \cite{Gor96} are essentially used.
\end{remark}

It suffices to prove Theorem \ref{gordonnonpos} for {\bf simply
connected manifolds}. Indeed, if a Rimannian homogeneous manifold
$M$ has a Riemannian symmetric space of nonpositive Ricci
curvature (equivalently, nonpositive sectional curvature) as a
universal covering, then it is symmetric too \cite{W,Hel}.

Let $(M, g)$ be a simply connected GO-manifolds with nonpositive Ricci curvature,
an let $G$ be its full connected isometry group. We know that $G$ is unimodular (Lemma \ref{unimod}),
and the isotropy subgroup $H$ must be connected. At first, we reduce the problem to the case when
$G$ is semisimple.

\begin{prop}\label{nonposricstr}
Let $(M,g)$ be a simply connected GO-manifold with nonpositive
Ricci curvature. Then it is a direct metric product of an
Euclidean space $\mathbb{E}^m$ and a simply connected GO-manifold
$(M_1,g_1)$ (with nonpositive Ricci curvature) with semisimple
full isometry groups.
\end{prop}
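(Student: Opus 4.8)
The plan is to analyze the structure of $\mathfrak{g} = \mathrm{Lie}(G)$ by looking at its radical $\mathfrak{r}$. First I would recall that a solvable Lie algebra has an abelian ideal that is also an ideal of the ambient algebra — the last nonzero term of the derived series of $\mathfrak{r}$ is characteristic in $\mathfrak{r}$, hence an ideal of $\mathfrak{g}$, and it is abelian. By Theorem \ref{ideal}, every Killing field in this abelian ideal $\mathfrak{a}$ has constant length. A Killing field of constant length has geodesic integral curves (Lemma \ref{geodkill}), and more is true: a constant-length Killing field $X$ satisfies $\nabla_X X = 0$ and, using the formula $\langle \nabla_Y X, X\rangle = \tfrac12 Y\cdot g(X,X) = 0$, one sees that $\nabla X$ is skew-symmetric and annihilates $X$; combined with the Killing equation this forces the orbits of $X$ (equivalently, the flow of $X$) to be particularly rigid. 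The key point I want to extract is that $\mathfrak{a}$, acting by Killing fields of constant length whose flows commute, spans at each point $x$ a subspace of $M_x$ on which the restricted metric is flat and which integrates to a totally geodesic flat submanifold; moreover the orthogonal complement is preserved.

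The main step is then a de Rham–type splitting. I would argue that the distribution $\mathcal{D}$ on $M$ given at $x$ by $\mathcal{D}_x = \{X(x) : X \in \mathfrak{a}\}$ is parallel: constancy of length of all $X \in \mathfrak{a}$ gives, via polarization, that $g(\nabla_Y X, X') + g(X, \nabla_Y X') = Y\cdot g(X,X')$ and one checks $g(X,X')$ is constant on $M$ for $X,X'\in\mathfrak{a}$ (apply Theorem \ref{ideal} to $X+X'$), so $\nabla_Y$ maps $\mathfrak{a}$-values into the orthogonal complement in a controlled way; together with $[\,\mathfrak{a},\mathfrak{a}\,]=0$ this yields that $\mathcal{D}$ and $\mathcal{D}^\perp$ are both parallel. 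Since $M$ is simply connected, the de Rham decomposition theorem splits $(M,g) = (N_0, g_0) \times (M_1, g_1)$ with $T N_0 = \mathcal{D}$. On $N_0$ the metric is flat (the commuting constant-length Killing fields act transitively on it and the curvature vanishes because these fields are parallel — their flows are translations), so $N_0 = \mathbb{E}^m$. The factor $(M_1,g_1)$ is again a simply connected GO-manifold with nonpositive Ricci curvature, since GO and curvature properties pass to Riemannian factors (see the remark after Lemma \ref{GO-criterion}).

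Finally I would iterate: on $(M_1,g_1)$ repeat the construction; its full isometry algebra $\mathfrak{g}_1$ again has a radical, and if that radical is nontrivial we split off another Euclidean factor. But a flat factor has been completely absorbed into $\mathbb{E}^m$ by the first step applied to the maximal such $\mathcal{D}$ (taking $\mathfrak{a}$ to be the sum of all abelian ideals, or iterating to exhaustion), so after finitely many steps the remaining factor $(M_1, g_1)$ has an isometry algebra whose radical contributes no further splitting; I claim this forces $\mathfrak{g}_1$ to be semisimple. The hard part will be this last claim — ruling out a solvable radical that does not produce a flat de Rham factor. The route is: the abelian ideal $\mathfrak{a}_1 \subset \mathfrak{g}_1$ still consists of constant-length Killing fields, so it still gives a parallel distribution and a flat factor unless $\mathfrak{a}_1$ vanishes on $M_1$, i.e. $\mathfrak{a}_1 = 0$; since every nonzero solvable ideal contains such an $\mathfrak{a}_1$, the radical of $\mathfrak{g}_1$ must be zero, hence $\mathfrak{g}_1$ is semisimple. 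Care is needed because Killing fields in $\mathfrak{a}_1$ could a priori act nontrivially yet span a distribution contained in an already-split flat part; maximality of $m$ in the first step is what closes this gap.
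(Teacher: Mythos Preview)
Your argument has a genuine gap at the step where you claim that constancy of all the pairings $g(X,X')$ for $X,X'\in\mathfrak{a}$, together with $[\mathfrak{a},\mathfrak{a}]=0$, forces the distribution $\mathcal{D}_x=\{X(x):X\in\mathfrak{a}\}$ to be parallel. This implication is false without the nonpositive Ricci hypothesis, which you never invoke at this point. A concrete counterexample is the $3$-dimensional Heisenberg group with its standard left-invariant metric: it is naturally reductive (hence a GO-space), the center $\mathfrak{a}=\mathbb{R}e_3$ is an abelian ideal of the full isometry algebra, and $e_3$ is a Killing field of constant length~$1$; yet $\nabla_{e_1}e_3=-\tfrac12 e_2\notin\mathcal{D}$, so $\mathcal{D}$ is not parallel and no flat de Rham factor splits off. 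The polarization identity you write down only tells you that $g(\nabla_Y X,X')+g(X,\nabla_Y X')=0$, which is far from saying $\nabla_Y X\in\mathcal{D}$.

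The paper closes this gap precisely by using the curvature hypothesis: once $X\in\mathfrak{a}$ has constant length, the Bochner-type identity for Killing fields gives $\Ric(X,X)=|\nabla X|^2\ge 0$ at every point; combined with $\Ric\le 0$ this forces $\nabla X\equiv 0$, i.e.\ $X$ is parallel (this is Theorem~4 of \cite{BerNikKF}, the constant-length analogue of Proposition~\ref{Bo}). A single nonzero parallel Killing field already splits off a line by de~Rham, and one iterates. So the missing ingredient in your plan is exactly this use of nonpositive Ricci to upgrade ``constant length'' to ``parallel''; without it the splitting step does not go through.
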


\begin{proof}
In the notation of Theorem \ref{ideal}, any Killing field $X\in
\mathfrak{a}$ has constant length on $(M,g)$. Since the Ricci
curvature is nonpositive, then by Theorem 4 in \cite{BerNikKF} we
get that $\Ric(X,X)=0$, moreover, the Killing field $X$ is
parallel on $(M,g)$, and the Riemannian manifold $(M,g)$ is a
direct metric product of two Riemannian manifolds, one of which is
a one-dimensional manifold tangent to Killing field $X$ and
another one is a GO-manifold with nonpositive Ricci curvature.
This procedure could be repeated unless the obtained second
manifold has a semisimple full isometry group.
\end{proof}

{\bf In what follows we suppose that the group $G$ is semisimple}. Now
we consider a reductive decomposition (see
(\ref{reductivedecomposition}))
$$
\mathfrak{g}=\mathfrak{h}\oplus \mathfrak{m},
$$
where $\mathfrak{g}={\rm Lie }(G)$, $\mathfrak{h}={\rm Lie} (H)$,
and $\mathfrak{m}$ is an orthogonal complement to $\mathfrak{h}$
in $\mathfrak{h}$ with respect to the Killing form
$B_{\mathfrak{g}}$ of the (semisimple) Lie algebra $\mathfrak{g}$.
The Riemannian metric $g$ is $G$-invariant and is determined by an
$\Ad(H)$-invariant Euclidean metric $g = (\cdot,\cdot)$ on  the
space $\mathfrak{m}$. Now we consider a {\bf maximal compactly
embedded} subalgebra $\mathfrak{k}\subset \mathfrak{g}$ such that
$\mathfrak{h}\subset \mathfrak{k}$.

If $\mathfrak{h}=\mathfrak{k}$, then the manifold
under consideration is a symmetric space \cite{W,Hel}.
Suppose now, that
$\mathfrak{h}\neq \mathfrak{k}$. Then there are $\Ad(H)$-invariant
subspaces $\mathfrak{m}_1, \mathfrak{m}_2 \subset \mathfrak{m}$
such that $(\mathfrak{m}_1,\mathfrak{m}_2)=0$,
$\mathfrak{m}=\mathfrak{m}_1 \oplus\mathfrak{m}_2$, and
$\mathfrak{k}=\mathfrak{h}\oplus \mathfrak{m}_1$.

Let $K^*$ be a compact Lie group with the Lie algebra
$\mathfrak{k}$ and $H^*$ be its subgroup corresponded to
subalgebra $\mathfrak{h}\subset \mathfrak{k}$. We have
$\mathfrak{k}=\mathfrak{h}\oplus \mathfrak{m}_1$, therefore
$\mathfrak{m}_1$ could be identified with the tangent space at the
point $eH^*$ of a compact homogeneous manifold $M^*=K^*/H^*$. We
consider $K^*$-invariant Riemannian metric $g^*$ on $M^*$ that is
generated with the inner product $(\cdot,
\cdot)|_{\mathfrak{m}_1}$. Note that $K^*$ may not act effectively
on $M^*=K^*/H^*$, but this is not important for calculation of the
Ricci curvature of $(M^*, g^*)$.

We choose a $(\cdot, \cdot)$-orthonormal basis $X_1,X_2, \dots,
X_r$, $r=\dim(\mathfrak{m}_1)$, in $\mathfrak{m}_1$, and $(\cdot,
\cdot)$-orthonormal basis $Y_1,Y_2, \dots, Y_s$,
$s=\dim(\mathfrak{m}_2)$, in $\mathfrak{m}_2$. Denote the Ricci
curvature of $(M^*, g^*)$ by $\Ric^*$ and the Killing forms of
$\mathfrak{k}$ and $\mathfrak{g}$ by $B_{\mathfrak{k}}$ and
$B_{\mathfrak{g}}$ respectively.
Using (\ref{ricc}) and the fact that
$\mathfrak{g}$ is unimodular
(Lemma \ref{unimod} is not necessary, because every semisimple Lie algebra is unimodular),
we get
$$
\Ric (X,X)=-
\frac{1}{2}B_{\mathfrak{g}}(X,X)-\frac{1}{2}\sum\limits_i
|[X,X_i]_{\mathfrak{m}}|^2-\frac{1}{2}\sum\limits_i
|[X,Y_i]_{\mathfrak{m}}|^2+
$$
$$
+\frac{1}{4}\sum\limits_{i,j}([X_i,X_j]_{\mathfrak{m}},X)^2
+\frac{1}{4}\sum\limits_{i,j}([Y_i,Y_j]_{\mathfrak{m}},X)^2
+\frac{1}{2}\sum\limits_{i,j}([X_i,Y_j]_{\mathfrak{m}},X)^2
$$
and
$$
\Ric^* (X,X)=-
\frac{1}{2}B_{\mathfrak{k}}(X,X)-\frac{1}{2}\sum\limits_i
|[X,X_i]_{\mathfrak{m}_1}|^2+
\frac{1}{4}\sum\limits_{i,j}([X_i,X_j]_{\mathfrak{m}_1},X)^2
$$
for any $X\in \mathfrak{m}_1$.
By Lemma \ref{simple}
$$
B_{\mathfrak{g}}(X,X)=B_{\mathfrak{k}}(X,X)+\sum\limits_{i}([X,[X,Y_i]]_{\mathfrak{m}},Y_i)=
B_{\mathfrak{k}}(X,X)-\sum\limits_{i}|[X,Y_i]_{\mathfrak{m}_2}|^2,
$$
then, using  Lemma \ref{simple} again
($([X_i,Y_j]_{\mathfrak{m}},X)^2=([Y_j,X_i]_{\mathfrak{m}_1},X)^2=([Y_j,X]_{\mathfrak{m}_1},X_i)^2$
and $\sum\limits_{i,j}([X_i,Y_j]_{\mathfrak{m}},X)^2=\sum\limits_i
|[X,Y_i]_{\mathfrak{m}_1}|^2$), we get

\begin{prop}[\cite{Gor96}]\label{eneq}
For any $X\in \mathfrak{m}_1$ the equality
$$
\Ric^* (X,X)=\Ric (X,X)-\frac{1}{2}\sum\limits_{1 \leq i <j \leq
r}([Y_i,Y_j]_{\mathfrak{m}_1},X)^2
$$
holds.
\end{prop}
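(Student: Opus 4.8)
The plan is to read off the identity by subtracting the displayed expression for $\Ric^*(X,X)$ from that for $\Ric(X,X)$ and simplifying with the two consequences of Lemma~\ref{simple} recorded just above the statement, namely $B_{\mathfrak{g}}(X,X)=B_{\mathfrak{k}}(X,X)-\sum_i|[X,Y_i]_{\mathfrak{m}_2}|^2$ and $\sum_{i,j}([X_i,Y_j]_{\mathfrak{m}},X)^2=\sum_i|[X,Y_i]_{\mathfrak{m}_1}|^2$. The expectation is that all terms cancel in pairs except for a single surviving contribution coming from $[\mathfrak{m}_2,\mathfrak{m}_2]$.

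First I would eliminate the terms built only from the $X_i$. Since $\mathfrak{k}=\mathfrak{h}\oplus\mathfrak{m}_1$ is a subalgebra, $[\mathfrak{m}_1,\mathfrak{m}_1]\subset\mathfrak{k}$, so for $X,X_i\in\mathfrak{m}_1$ one has $[X,X_i]_{\mathfrak{m}_2}=0$; hence $|[X,X_i]_{\mathfrak{m}}|^2=|[X,X_i]_{\mathfrak{m}_1}|^2$ and $([X_i,X_j]_{\mathfrak{m}},X)=([X_i,X_j]_{\mathfrak{m}_1},X)$, so the corresponding terms of the two Ricci formulas are literally equal and drop out of the difference.

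Next I would group the Killing forms with $-\tfrac12\sum_i|[X,Y_i]_{\mathfrak{m}}|^2$: splitting orthogonally $|[X,Y_i]_{\mathfrak{m}}|^2=|[X,Y_i]_{\mathfrak{m}_1}|^2+|[X,Y_i]_{\mathfrak{m}_2}|^2$ and substituting $B_{\mathfrak{g}}(X,X)-B_{\mathfrak{k}}(X,X)=-\sum_i|[X,Y_i]_{\mathfrak{m}_2}|^2$, the $\mathfrak{m}_2$-pieces cancel and what remains is $+\tfrac12\sum_i|[X,Y_i]_{\mathfrak{m}_1}|^2$; the second consequence of Lemma~\ref{simple} shows that this is exactly annihilated by the term $-\tfrac12\sum_{i,j}([X_i,Y_j]_{\mathfrak{m}},X)^2$ inherited from $\Ric(X,X)$. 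The only term left standing is then $-\tfrac14\sum_{i,j}([Y_i,Y_j]_{\mathfrak{m}},X)^2$; since $X\in\mathfrak{m}_1$ is $(\cdot,\cdot)$-orthogonal to $\mathfrak{m}_2$ I may replace $[Y_i,Y_j]_{\mathfrak{m}}$ by $[Y_i,Y_j]_{\mathfrak{m}_1}$, and as the diagonal terms vanish while the summand is symmetric in $(i,j)$ the double sum equals $2\sum_{i<j}$, which yields $\Ric^*(X,X)=\Ric(X,X)-\tfrac12\sum_{i<j}([Y_i,Y_j]_{\mathfrak{m}_1},X)^2$ (the range being $1\le i<j\le s$ with $s=\dim\mathfrak{m}_2$, since the $Y_i$ exhaust an orthonormal basis of $\mathfrak{m}_2$).

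I do not expect a genuine obstacle: this is essentially bookkeeping, the only structural input beyond the two Ricci formulas and Lemma~\ref{simple} being that $\mathfrak{k}$ is a subalgebra, which forces $[\mathfrak{m}_1,\mathfrak{m}_1]\subset\mathfrak{h}\oplus\mathfrak{m}_1$. The step demanding the most care is keeping the projections onto $\mathfrak{h}$, $\mathfrak{m}_1$ and $\mathfrak{m}_2$ straight in every iterated bracket, so that one correctly matches up the summands that cancel.
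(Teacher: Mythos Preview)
Your argument is correct and coincides with the paper's own computation, which is carried out in the paragraph immediately preceding the proposition: both proofs subtract the two Ricci formulas, invoke $B_{\mathfrak{g}}(X,X)=B_{\mathfrak{k}}(X,X)-\sum_i|[X,Y_i]_{\mathfrak{m}_2}|^2$ and $\sum_{i,j}([X_i,Y_j]_{\mathfrak{m}},X)^2=\sum_i|[X,Y_i]_{\mathfrak{m}_1}|^2$ from Lemma~\ref{simple}, and (implicitly in the paper, explicitly in your write-up) use $[\mathfrak{m}_1,\mathfrak{m}_1]\subset\mathfrak{k}$ to match the $X_i$-only terms. Your remark that the summation range should be $1\le i<j\le s$ rather than $r$ is correct; the $r$ in the displayed formula is a typo in the paper.
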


Since $(M=G/H,g)$ has nonpositive Ricci curvature, then
from Proposition \ref{eneq} we get $\Ric^* (X,X) \leq 0$ for any
$X\in \mathfrak{m}_1$, but $\Ric^* (X,X) = 0$ implies $\Ric(X,X) = 0$
and $([\mathfrak{m}_2,\mathfrak{m}_2]_{\mathfrak{m}_1},X)=0$.

Since $(M^*=K^*/H^*,g^*)$ is a compact homogeneous Riemannian
manifold with nonpositive Ricci curvature, then it is Euclidean torus by
Corollary \ref{Bo1}.
This implies $\Ric^* (X,X) = \Ric (X,X)= 0$ for all $X\in
\mathfrak{m}_1$, $\mathfrak{m}_1$ lies in the center of
$\mathfrak{k}$, and $[\mathfrak{m}_2,\mathfrak{m}_2]\subset
\mathfrak{h}\oplus \mathfrak{m}_2$.

Let $\mathfrak{p}$ be a $B_{\mathfrak{g}}$-orthogonal compliment
to $\mathfrak{k}$ in $\mathfrak{g}$. It is well known that
$[\mathfrak{p},\mathfrak{p}]\subset \mathfrak{k}$; if
$\mathfrak{h}_1:=[\mathfrak{p},\mathfrak{p}]$, then
$\mathfrak{g}_1:=\mathfrak{h}_1\oplus \mathfrak{p}$ is a maximal
noncompact semisimple ideal in the Lie algebra $\mathfrak{g}$.

Now we will prove that $\mathfrak{p}=\mathfrak{m}_2$. By Lemma
\ref{simple} we get that for any $U\in \mathfrak{m}_1$ the
operator $\ad_U^{\mathfrak{m}}$ is skew-symmetric. The same is
true for any $U\in \mathfrak{h}$, and, therefore, for any $U\in
\mathfrak{k}=\mathfrak{h}\oplus \mathfrak{m}_1$. From the
relations $\mathfrak{p}\subset \mathfrak{m}$,
$\mathfrak{h}_1\subset \mathfrak{k}$,
$[\mathfrak{k},\mathfrak{m}_1]=0$, and
$[\mathfrak{k},\mathfrak{p}]=\mathfrak{p}$ we get
$$
(\mathfrak{m}_1,\mathfrak{p})=(\mathfrak{m}_1,[\mathfrak{k},\mathfrak{p}]_{\mathfrak{m}})=
([\mathfrak{k},\mathfrak{m}_1]_{\mathfrak{m}},\mathfrak{p})+
(\mathfrak{m}_1,[\mathfrak{k},\mathfrak{p}]_{\mathfrak{m}})=0,
$$
since the operators $\ad_U^{\mathfrak{m}}$ are skew-symmetric  for
all $U\in {\mathfrak{k}}$. This proves
$\mathfrak{p}=\mathfrak{m}_2$. Therefore,
$\mathfrak{h}_1:=[\mathfrak{p},\mathfrak{p}]=[\mathfrak{m}_2,\mathfrak{m}_2]\subset
\mathfrak{h}\oplus \mathfrak{m}_2$, and we get
$\mathfrak{h}_1\subset \mathfrak{h}$.

Let $\mathfrak{g}_2$ be a $B_{\mathfrak{g}}$-orthogonal compliment
to $\mathfrak{g}_1$ in $\mathfrak{g}$. Then $\mathfrak{g}_2$ is a
maximal compact semisimple ideal in the Lie algebra
$\mathfrak{g}$. If $\mathfrak{h}_2$ is a
$B_{\mathfrak{g}}$-orthogonal compliment to $\mathfrak{h}_1$ in
$\mathfrak{h}$, then $\mathfrak{g}_2=\mathfrak{h}_2\oplus
\mathfrak{m}_1$. Recall that
$[\mathfrak{h}_2,\mathfrak{m}_1]\subset
[\mathfrak{h},\mathfrak{m}_1]=0$. Hence, $\mathfrak{h}_2$ is an
ideal in the Lie algebra $\mathfrak{g}$. Since the space $G/H$ is
effective, $\mathfrak{h}_2$ is trivial. Since
$\mathfrak{m}_1=\mathfrak{g}_2$ is commutative ($\mathfrak{m}_1$ lies in the center of
$\mathfrak{k}$), then it is trivial too (otherwise, $\mathfrak{g}_2$ is not semisimple). Hence,
$\mathfrak{h}=\mathfrak{k}$, and $(M=G/H, g)$ is a symmetric
space.

Therefore, Theorem \ref{gordonnonpos} is completely proved.

\medskip

{\bf Acknowledgements.}  The project was supported in part by the
State Maintenance Program for the Leading Scientific Schools of
the Russian Federation (grant NSh-6613.2010.1), by Federal Target
Grant ``Scientific and educational personnel of innova\-tive
Russia'' for 2009-2013 (government contract No. 02.740.11.0457),
and by RFBR (grant 10-01-9000-Bel-a).

\end{document}